\newtheorem{theorem}{Theorem}[section]
\newtheorem{corollary}[theorem]{Corollary}
\newtheorem{proposition}[theorem]{Proposition}
\theoremstyle{definition}
\theoremstyle{remark}
\newtheorem{remark}[theorem]{Remark}
\def\Ric{\text{Ric}}
\def\R{\Bbb R}
\def\P{\Bbb P}
\def\Sph{\Bbb S}
\def\Rc{\operatorname{{\bf R}}}
\def\id{\operatorname{id}}
\def\Ric{\operatorname{Ric}}
\def\tr{\operatorname{tr}}
\def\tri{\operatorname{tri}}
\numberwithin{equation}{section}
\begin{document}

\title{ On $4$-dimensional gradient shrinking solitons}

\author{Lei Ni}
\address{Department of Mathematics, University of California at San Diego, La Jolla, CA 92093}

\email{lni@math.ucsd.edu}
\author{Nolan  Wallach}
\address{Department of Mathematics, University of California at San Diego, La Jolla, CA 92093}

\email{nwallach@math.ucsd.edu}

\date{September 2007}
\thanks{ The first author's research  was
supported in part by NSF grant DMS-0504792  and an Alfred P. Sloan
Fellowship, USA.
The second author's research was partially supported by an NSF Summer Grant.
}

\keywords{}

\begin{abstract}In this paper we classify the four dimensional gradient shrinking solitons under certain curvature conditions satisfied by all solitons arising from finite time singularities of Ricci flow on compact four manifolds with positive isotropic curvature. As a corollary we generalize a result of Perelman on three dimensional gradient shrinking solitons  to dimension four.
\end{abstract}

\maketitle

\section{Introduction}

The goal of this paper is to generalize a result of Perelman on three dimensional gradient
shrinking solitons to dimension four. In his surgery paper Perelman proved the following statement
\cite{P2}:

\begin{theorem} \label{perelman1} Any $\kappa$-non-collapsed (for some $\kappa>0$) complete
 gradient shrinking
 soliton $M^3$ with bounded
positive sectional curvature must be compact.
\end{theorem}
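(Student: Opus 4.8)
The plan is to run Perelman's argument, which proceeds by contradiction. Normalize the shrinking soliton equation to $\Ric + \H(f) = \frac12 g$. I first record the standard integrability identities $R + \Delta f = \frac32$, $R + |\nabla f|^2 - f \equiv \mathrm{const}$ (normalized to $0$), and $\nabla R = 2\Ric(\nabla f)$; since $R \ge 0$ these force $|\nabla f|^2 \le f + C$, so $f$ is proper, bounded below, attains an interior minimum at some $p$, and $\frac14(d(p,x)-c_1)^2 \le f(x) \le \frac14(d(p,x)+c_2)^2$. Consequently every sublevel set is compact; assume for contradiction that $M$ is noncompact.

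Next I analyze the geometry of $f$ near infinity. Applying the index form to a minimizing geodesic $\gamma$ from $p$ and choosing a tent-function test coefficient, the bounded-curvature hypothesis yields $\int_0^{d(p,x)} \Ric(\gamma',\gamma')\,ds \le C_0$ uniformly; integrating the soliton equation along $\gamma$ then gives $|\nabla f|(x) \ge \frac12 d(p,x) - C_0$ and shows $\nabla f$ is asymptotically the outward radial field. Hence for all large regular values $c$ the level set $\Sigma_c = f^{-1}(c)$ is a closed surface, the gradient flow of $\nabla f$ identifies the end with $\Sigma_c \times [c,\infty)$, and the second fundamental form of $\Sigma_c$, which equals $\frac{1}{|\nabla f|}\bigl(\frac12 g - \Ric\bigr)\big|_{T\Sigma_c}$, tends to zero as $c \to \infty$. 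By the Gauss equation the intrinsic curvature of $\Sigma_c$ then converges to the (positive) ambient sectional curvature of its tangent planes, so $\Sigma_c \cong \Sph^2$ for $c$ large and Gauss--Bonnet gives $\int_{\Sigma_c}K_{\Sigma_c}\,dA = 4\pi$, whence $\operatorname{Area}(\Sigma_c) \ge 4\pi/\Lambda$ with $\Lambda$ the curvature bound.

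The contradiction is then extracted from $\kappa$-non-collapsedness. First one upgrades the area bound to uniform control (an upper bound on the diameter of $\Sigma_c$): a cross-section that were long and thin would contain, at the scale where $|\mathrm{Rm}|$ controls the geometry, a ball of volume far below $\kappa$ times its radius cubed. With the second fundamental form tending to zero and bounded cross-section diameter, the pieces $f^{-1}([c,c+\sqrt{c}])$---which have unit length in the $\nabla f$-direction after reparametrizing by $|\nabla f|$---are metrically close to products $\Sigma_c \times [0,1]$, and, centering at points running to infinity, Cheeger--Gromov--Hamilton compactness (legitimate by the bounded curvature and $\kappa$-non-collapsedness) produces a complete limit that splits off a line. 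Passing the self-similar Ricci flow underlying $M$ to the limit, this splitting forces the asymptotic geometry of the end to be that of the round shrinking cylinder $\Sph^2\times\R$; but Hamilton's strong maximum principle for the curvature operator, applied to that ancient solution together with the \emph{strict} positivity of its sectional curvature, is incompatible with an end modelled on $\Sph^2\times\R$---the desired contradiction.

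The step I expect to be the main obstacle is the passage from the cross-section area bound to genuine control of the asymptotic geometry: ruling out that the $\Sigma_c$ degenerate as $c\to\infty$, and making rigorous the limiting/dimension-reduction argument, including that the soliton identities and the curvature positivity descend to the limit. This is precisely where $\kappa$-non-collapsedness is indispensable---without it a noncompact shrinker of bounded positive curvature could a priori have arbitrarily thin ends---and where bounded curvature enters twice over, both to apply the compactness theorem and to convert Gauss--Bonnet into a lower bound on cross-sectional area.
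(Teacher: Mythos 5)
The paper does not prove Theorem \ref{perelman1} itself: it quotes it as Perelman's result from \cite{P2}, points to \cite{CaZ, KL, MT} and Theorem~9.79 of \cite{CLN} for detailed expositions, and refers to \cite{NW1} for an \emph{alternate} proof. Your proposal is a faithful (if compressed) reconstruction of Perelman's original geometric argument, and it is essentially correct at the level of detail you give: quadratic growth of $f$, the second-variation bound $\int_0^{d(p,x)}\Ric(\gamma',\gamma')\,ds\le C_0$, the resulting estimate $|\nabla f|\gtrsim \tfrac12 d(p,\cdot)$, vanishing second fundamental form of the level surfaces $\Sigma_c$, Gauss--Bonnet giving a cross-sectional area lower bound, $\kappa$-non-collapsing giving diameter control, Cheeger--Gromov--Hamilton compactness producing a split limit, and the strong maximum principle killing the cylinder. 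You also correctly identify where the real work lies (making the dimension reduction rigorous and passing the soliton/curvature positivity to the limit).

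However, this is a genuinely different route from the one the authors themselves take. The approach of \cite{NW1}, which Sections~2 and 3 of the present paper generalize to dimension four, never passes to a blow-down or cylindrical limit and does not invoke the strong maximum principle or $\kappa$-non-collapsedness at all. Instead one identifies a scale-invariant algebraic curvature expression (in dimension three essentially built from $|\Ric|^2/S^2$; in dimension four the ratio $B_3^2/((A_1+A_2)(C_1+C_2))$), shows via the soliton equation and Hamilton's curvature evolution inequalities that it satisfies a favorable elliptic inequality, multiplies by the Gaussian weight $e^{-f}$ (times suitable curvature factors), integrates over $M$ using $f\ge\tfrac18 r^2-C$ and Shi's estimates to justify the integration by parts, and reads off rigidity from the equality case. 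What each approach buys: Perelman's argument is geometrically transparent and uses only elementary comparison geometry plus the strong maximum principle, but requires bounded curvature and non-collapsing; the analytic approach drops the $\kappa$-non-collapsedness hypothesis entirely, tolerates merely sub-exponential curvature growth rather than boundedness (cf.\ (\ref{pinch1}), (\ref{eq0})), and scales to higher dimensions where the topological dimension-reduction step would be far harder. So: correct sketch, but of Perelman's proof rather than of the one this paper is built on.
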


Combining with Hamilton's convergence (or curvature pinching) result
\cite{H82} (see also \cite{Ivey}) one can conclude that $M^3$ must
be isometric to a quotient of $\Sph^3$. The reader can find more detailed proof of this result in \cite{CaZ, KL, MT} and Theorem 9.79 of \cite{CLN}. We refer to \cite{NW1} for
the discussion on the uses of such a result in the study of Ricci
flow, an alternate proof to the above result, basic frame work for the
high dimensional cases and a related result in high dimensions.

For four manifolds, in \cite{H86}, Hamilton proved that for any
compact Riemannian manifold with positive curvature operator, the
Ricci flow deforms it into a metric of constant curvature. Such a
result has been generalized by H. Chen \cite{Ch} to manifolds whose
curvature operator is  $2$-positive. (Recently, in a foundational
work B\"ohm and Wilking \cite{BW1}
 have generalized this  result to all dimensions.)
 However it is still unknown if there exists any four dimensional complete
 gradient shrinking solitons
 with positive curvature operator which is not compact.

  In \cite{H97}, Hamilton initiated another
 important direction, Ricci flow with surgery, and used the method to study the topology of
  four manifolds with positive isotropic curvature.

Recall from \cite{H86} that there is a natural splitting of $\wedge^2(\R^4)$ into self-dual and anti-self-dual parts and  one can write the curvature
operator $\Rc$ as
$$
\Rc=\left(\begin{matrix} A & B\cr B^t & C\end{matrix}\right)
$$
according to the decomposition $\wedge^2(\R^4)=\wedge_+ \oplus
\wedge_{-}$. We may choose the basis for $\wedge_+$ and $\wedge_{-}$
as
\begin{eqnarray*}
\varphi_1=\frac{1}{\sqrt{2}}\left(e_1\wedge e_2 +e_3\wedge
e_4\right), \quad &\quad& \quad
\psi_1=\frac{1}{\sqrt{2}}\left(e_1\wedge e_2 -e_3\wedge
e_4\right),\\
\varphi_2=\frac{1}{\sqrt{2}}\left(e_1\wedge e_3 +e_4\wedge
e_2\right), \quad &\quad& \quad
\psi_2=\frac{1}{\sqrt{2}}\left(e_1\wedge e_3 -e_4\wedge
e_2\right),\\
\varphi_3=\frac{1}{\sqrt{2}}\left(e_1\wedge e_4 +e_2\wedge
e_3\right),\quad &\quad&\quad
\psi_3=\frac{1}{\sqrt{2}}\left(e_1\wedge e_4 -e_2\wedge e_3\right),
\end{eqnarray*}
where $\{e_1, e_2, e_3, e_4\}$ is a positively oriented basis. The
first Bianchi identity implies that $\tr(A)=\tr(C)=\frac{S}{4}$
where $S$ is the scalar curvature. Note that $A$ and $C$ are
symmetric. Let $A_1\le A_2\le A_3$ and $C_1\le C_2\le C_3$ be
eigenvalues of $A$ and $C$ respectively. Then $\Rc$ has positive
isotropic curvature amounts to that $A_1+A_2>0$ and $C_1+C_2>0$.

In \cite{H97}, it was shown that on the blow-up limit of any finite
time singularity of Ricci flow on a compact $4$-manifold initially
with positive isotropic curvature, there exists $\delta >$ depending only on the initial manifold
such that the following pinching estimates hold
\begin{equation}\label{upic}
A_1 \ge \delta A_3,\quad C_1\ge \delta C_3,\quad  A_1C_1 \ge B_3^2
\end{equation}
where $0\le B_1\le B_2\le B_3$ are singular values of $B$. We say
that $\Rc$ has {\it uniformly positive isotropic curvature} if
(\ref{upic}) holds with $A_1C_1>0$. Note that this implies $\Rc\ge
0$. In view of the work \cite{H97} (see also related work
\cite{Chen-Zhu}) for the study of
 the Ricci flow on four manifolds with positive isotropic curvature it  is
useful to have a classification of gradient shrinking solitons with uniformly
positive isotropic curvature
 in the sense of (\ref{upic}).

On the other hand, in general on a gradient shrinking solitons with
 positive isotropic curvature,
it is not clear to the authors whether or not (\ref{upic}) always
holds. We say that a
 Riemannian four manifold $M$ has {\it weakly uniformly positive isotropic
  curvature}
 if there exists $\Sigma>0$ such that
\begin{equation}\label{wpic}
\left(\frac{B^2_3}{(A_1+A_2)(C_1+C_2)}\right)(x)\le \Sigma.
\end{equation}
By Theorem B2.1 of \cite{H97}, it is easy to infer that a gradient shrinking soliton with bounded
curvature satisfying (\ref{wpic}) must satisfy
\begin{equation}\label{wpic2}
\left(\frac{B^2_3}{(A_1+A_2)(C_1+C_2)}\right)(x)\le \frac{1}{4}.
\end{equation}

The main purpose of this article is to show a classification result on the gradient
shrinking solitons
satisfying a rather weak pinching condition:
\begin{equation}\label{pinch1}
\left(\frac{B^2_3}{(A_1+A_2)(C_1+C_2)}\right)(x)\le \exp(a(r(x)+1))
 \end{equation}
 for some $a>0$, where $r(x)$ is the distance function to a fixed point on the manifold.
 As in \cite{NW1}
  we also assume that the curvature tensor satisfies
 \begin{equation}\label{eq0}
 |R_{ijkl}|(x)\le \exp(b(r(x)+1))
 \end{equation}
for some $b>0$.

\begin{theorem}\label{main1} Any four dimensional complete gradient shrinking soliton with nonnegative
 curvature operator and  positive isotropic curvature satisfying (\ref{pinch1})
  and (\ref{eq0}) is
  either a quotient of $\Sph^4$ or a quotient of $\Sph^3\times \R$.
\end{theorem}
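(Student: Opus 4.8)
The plan is to combine three ingredients: (i) the classification of gradient shrinking solitons with nonnegative curvature operator via the structure of their curvature under Ricci flow, (ii) the restricted holonomy / de~Rham splitting that the pinching condition (\ref{pinch1}) forces, and (iii) the fact that $\Sph^4$ and $\Sph^3\times\R$ are characterized among the remaining possibilities by a symmetry/isotropy argument. Concretely, since $M^4$ has nonnegative curvature operator, the soliton equation $\Ric+\H(f)=\tfrac12 g$ together with Hamilton's strong maximum principle (applied to the evolution of $\Rc$ under the associated Ricci flow) forces the image of the curvature operator to be a parallel, holonomy-invariant subbundle of $\wedge^2(TM)$. Thus the universal cover $\tM$ splits isometrically as a product $N_1\times\cdots\times N_p$ with each factor either flat $\R^{k}$, or an irreducible manifold whose curvature operator has no kernel and positive isotropic curvature. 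Because $\dim M=4$, the possible splittings are: $\tM$ irreducible of dimension $4$; $\tM=N^3\times\R$; $\tM=N^2\times\R^2$; or $\tM$ flat. One then rules out the last two using that a shrinking soliton factor of positive dimension cannot be flat of rank $\ge 1$ in a way compatible with $\Ric\le\tfrac12 g$ having the right trace behaviour, and that $N^2$ with the induced soliton structure and positive curvature is $\Sph^2$, which does not have positive isotropic curvature in the $4$-manifold sense — reducing us to $\tM$ irreducible $4$-dimensional, or $\tM=N^3\times\R$.

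Next I would handle the two surviving cases. In the split case $\tM=N^3\times\R$, the factor $N^3$ inherits a $3$-dimensional gradient shrinking soliton structure (the potential function is, up to an affine function on the $\R$-factor, pulled back from $N^3$) with nonnegative — hence, being irreducible and $3$-dimensional, positive — sectional curvature, and with curvature bound (\ref{eq0}); by Theorem~\ref{perelman1} (Perelman), $N^3$ is compact, and then by Hamilton's pinching it is a quotient of $\Sph^3$, so $M$ is a quotient of $\Sph^3\times\R$. In the irreducible $4$-dimensional case the real work is to show $M$ has constant curvature. Here the pinching hypothesis (\ref{pinch1}), upgraded to (\ref{wpic2}) via Theorem~B2.1 of \cite{H97} once we establish the soliton has bounded curvature (which follows from nonnegative curvature operator plus standard soliton estimates: $S\le \tfrac{n}{2}$ and $|Rm|\le C S$ when $\Rm\ge0$, so (\ref{eq0}) can be improved to a genuine bound), together with $A_1C_1\ge B_3^2\ge 0$ and the B\"ohm–Wilking / Hamilton-type analysis of the curvature operator ODE on $\wedge_+\oplus\wedge_-$, should force the off-diagonal block $B$ to vanish identically and the blocks $A,C$ to be scalar. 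Once $B\equiv 0$ and $A=C=\tfrac{S}{12}\,\mathrm{Id}$, the manifold is Einstein with $W^\pm$ controlled, and a shrinking soliton that is Einstein with nonnegative curvature operator of this type is a quotient of $\Sph^4$.

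The main obstacle, which I expect to occupy the bulk of the paper, is precisely the last step: showing that in the irreducible $4$-dimensional case the weak pinching (\ref{pinch1})/(\ref{wpic2}) — which is far weaker than $2$-positivity of the curvature operator, and in particular does not directly fall under B\"ohm–Wilking \cite{BW1} or H.~Chen \cite{Ch} — still forces constant curvature for a \emph{soliton}. The strategy I would pursue is to exploit the soliton identities (the Hamilton–Ivey-type differential inequalities satisfied by $A,B,C$ on the shrinking soliton, obtained by contracting the second Bianchi identity with $\nabla f$) to derive a pointwise differential inequality for the quantity $\tfrac{B_3^2}{(A_1+A_2)(C_1+C_2)}$, and then apply a maximum-principle argument at infinity — using the weights $\exp(a(r+1))$ and $\exp(b(r+1))$ in (\ref{pinch1})–(\ref{eq0}) precisely to control the behaviour of $f$ and of the drift term $-\nabla f$ (whose growth is quadratic in $r$, dominating the exponential barriers only after a more careful reweighting, which is the delicate point) — to conclude $B_3\equiv 0$, after which the $A$- and $C$-eigenvalue equations degenerate to the Einstein/constant-curvature case. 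Managing the interplay between the exponential curvature bounds, the quadratic growth of the potential $f$, and the algebraic curvature inequalities at the "first bad point" is where the real care is needed.
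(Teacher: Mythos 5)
Your proposal correctly identifies the key quantity $B_3^2/\bigl((A_1+A_2)(C_1+C_2)\bigr)$ and the idea of exploiting the soliton identity to convert the Ricci--flow evolution of the curvature components into a drift--Laplacian inequality, which is indeed the heart of the paper's argument. But there are several genuine gaps and one structural misdirection.

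First, the pointwise ``maximum principle at infinity'' you outline is the wrong tool and is exactly what the growth hypotheses (\ref{pinch1}) and (\ref{eq0}) are \emph{not} strong enough to support; the paper instead multiplies the PDE inequality for $\bigl(\varphi^2/(\psi_1\psi_2)\bigr)^2$ by the weight $e^{-f+\log(\psi_1\psi_2)}$ and integrates over $M$, then integrates by parts. Convergence of all integrals follows because $f(x)\ge \tfrac18 r^2(x)-C$ (Lemma 1.3 of \cite{NW1}), so $e^{-f}$ decays like $e^{-r^2/8}$ and overwhelms the exponential bounds $e^{ar}$, $e^{br}$ --- there is no ``first bad point,'' and the competition you worry about between quadratic growth of $\nabla f$ and exponential curvature bounds is resolved by the Gaussian weight, not by a pointwise barrier. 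Second, the conclusion one actually gets from the resulting vanishing of the gradient terms and of the reaction term $E$ is $A_1=A_2=C_1=C_2=B_1=B_2=B_3$, hence $BB^t=b^2\id$ --- \emph{not} $B\equiv 0$. Aiming for $B\equiv 0$ in your ``irreducible'' case would not follow from this PDE argument, and in fact it shouldn't: $\Sph^3\times\R$ has $BB^t=\id\ne 0$. The paper then needs a separate algebraic step (the preliminary Proposition of Section 2) to show that $BB^t=b^2\id$ together with $A,C\ge 0$ forces $P=2\tri(\Rc)S-\sigma^2|R_{ijkl}|^2\le 0$, at which point the classification machinery from \cite{NW1} (Corollary 4.2 there) finishes the job. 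Your proposal skips this entirely. Third, your preliminary de~Rham splitting is neither used nor needed in the paper and carries its own problems: the claim that each irreducible factor inherits positive isotropic curvature in its own right is unjustified, and the route through Theorem~\ref{perelman1} is unnecessary once one has the $P\le 0$ machinery. Finally, the assertion that nonnegative curvature operator plus ``standard soliton estimates'' give a uniform bound $S\le n/2$ and hence bounded curvature is false for noncompact shrinkers (on such solitons $S$ may grow, which is precisely why (\ref{eq0}) is posited as a hypothesis rather than derived); so the proposed upgrade of (\ref{pinch1}) to (\ref{wpic2}) via Theorem B2.1 of \cite{H97} is not available a priori.
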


We should remark that in view of the examples  \cite{Ko, Co, FIK}
some conditions on the curvature operator are essential to obtain a
classification result as above. As a corollary of Theorem
\ref{main1} we have the following  four dimensional analogue
 of Theorem \ref {perelman1}.

\begin{corollary}
Any four dimensional gradient shrinking soliton with  positive
curvature operator satisfying (\ref{pinch1}) and (\ref{eq0})   must
be compact.
\end{corollary}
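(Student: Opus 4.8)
The plan is to read this off directly from Theorem \ref{main1}. First I would check that the hypotheses of that theorem are implied by those of the corollary: a positive curvature operator is a fortiori nonnegative, and in dimension four it forces positive isotropic curvature, since then the diagonal blocks $A$ and $C$ of $\Rc$ are positive definite, so in particular $A_1+A_2>0$ and $C_1+C_2>0$. Completeness is built into the notion of gradient shrinking soliton used here, and the growth bounds (\ref{pinch1}) and (\ref{eq0}) are assumed outright. Thus Theorem \ref{main1} applies, and $M$ is isometric to a quotient of $\Sph^4$ or of $\Sph^3\times\R$.

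It then remains only to eliminate the $\Sph^3\times\R$ possibility. On a Riemannian product the curvature tensor $R_{ijkl}$ vanishes whenever the four indices are not all tangent to the same factor; hence on $\Sph^3\times\R$ every $2$-form of the type $v\wedge\p_t$, with $v$ tangent to the $\Sph^3$ factor, lies in the kernel of $\Rc$. So $\Sph^3\times\R$, and therefore any quotient of it (being locally isometric), has a curvature operator that is merely positive semidefinite, never strictly positive. This contradicts the hypothesis, so $M$ must be a quotient of $\Sph^4$; since $\Sph^4$ is compact and a group acting freely and properly discontinuously on it is necessarily finite, $M$ is compact.

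I do not expect any real obstacle here: essentially all the work is in Theorem \ref{main1}, and the corollary is just the observation that the non-compact model appearing in its conclusion carries a flat direction and is therefore incompatible with a strictly positive curvature operator. The only point worth stating carefully is that the classification in Theorem \ref{main1} is up to isometry and allows quotients, which is precisely what makes the kernel computation above valid for the quotient case as well.
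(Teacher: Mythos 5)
Your argument is correct and is exactly the paper's intended route: positive curvature operator gives both nonnegativity and (since the diagonal blocks $A,C$ are then positive definite) positive isotropic curvature, so Theorem \ref{main1} applies, and $\Sph^3\times\R$ together with its quotients is excluded because its curvature operator kills the mixed $2$-forms $v\wedge\partial_t$ and is therefore only semidefinite, leaving compact quotients of $\Sph^4$. No gaps.
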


Note that there exists a general compactness result \cite{NW2} under
a certain pinching condition on the curvature operator, provided
that  the curvature operator is bounded. But the condition
(\ref{pinch1}) is a much weaker one since the curvature operator
pinching of \cite{NW2} implies that there exists $\epsilon>0$ with
$A_1\ge \epsilon A_3$, $C_1\ge \epsilon C_3$, which further implies
$(A_1+A_2)(C_1+C_2)\ge \epsilon' S^2\ge \epsilon'\delta$ for some
positive $\epsilon'$ and $\delta$ (by Proposition 1.1 of \cite{N}).
From the last estimate and the boundedness of curvature one can
deduce  (\ref{wpic}).

Combining the pinching result Theorem B1.1  of \cite{P1}, and Proposition 11.2 of \cite{P1} (see also \cite{N}), one can conclude that the {\it asymptotic soliton},  borrowing the terminology from \cite{P2},  arising from the singularity of Ricci flow on a four manifold with positive isotropic curvature, has nonnegative curvature operator, satisfies (\ref{upic}) and has at most quadratic curvature growth. Hence one can apply Theorem \ref{main1} to obtain a classification on  such {\it asymptotic solitons}.

\bigskip

{\it Acknowledgement}.  The first author would like to thank Christoph B\"ohm and Burkhard Wilking
for helpful discussions and suggestions.

\section{A preliminary result}

 From \cite{H86} we know that
$$
\Rc^{\#}=2\left(\begin{matrix} A^{\#} & B^{\#}\cr (B^t)^{\#} &
C^{\#}\end{matrix}\right),
$$
 the traceless part of $A$
and $C$ are $W_+$ and $W_{-}$, the self-dual part and the
anti-self-dual part of Weyl curvature,  and $B$ is the traceless
Ricci curvature. It is easy to see that $\tr(A)=\tr(C)=\frac{S}{4}$.
Here $S$ is the scalar curvature. Notice that $A^{\#}$, $B^{\#}$,
$C^{\#}$ are computed as transformations of $\wedge^2(\R^3)$. For
example $A^{\#}=\det(A)(A^t)^{-1}$, while
$B^{\#}=-\det(B)(B^{t})^{-1}$.

Let $\sigma^2=|\Ric|^2$ and $\tilde \sigma^2=|\Ric_0|^2$, where
$\Ric_0$ is the traceless part of Ricci tensor. Also let $\lambda_i$ be the eigenvalue of $\Ric_0$.
First we shall determine  how $\tilde \sigma^2$ is related to $B$. Direct
computation shows that
$$
B=\frac{1}{2}\left(\begin{matrix} R_{1212}-R_{3434} & R_{23}-R_{14}
& R_{24}+R_{13}\cr R_{23}+R_{14} & R_{1313}-R_{2424} &
R_{34}-R_{12}\cr R_{24}-R_{13}& R_{34}+R_{12} &
R_{1414}-R_{2323}\end{matrix}\right).
$$
Here $R_{ij}$ are the Ricci tensor components. From this we have the
following expression of $\Ric_0$ in terms of $B$:
\begin{equation}\label{B}
\Ric_0=\left(\begin{matrix} B_{11}+B_{22}+B_{33} & B_{32}-B_{23} &
B_{13}-B_{31} & B_{21}-B_{12}\cr B_{32}-B_{23} &
B_{11}-B_{22}-B_{33} & B_{21}+B_{12} & B_{13}+B_{31}\cr
B_{13}-B_{31}&B_{21}+B_{12}& B_{22}-B_{11}-B_{33} & B_{23}+B_{32}\cr
B_{21}-B_{12}&B_{13}+B_{31}& B_{23}+B_{32}&B_{33}-B_{11}-B_{22}
\end{matrix}\right).
\end{equation}
Direct computation shows that
$$
\tilde \sigma^2 = 4|B|^2 \quad \quad \mbox{ and } \quad \quad
\sum_1^4\lambda_j^3=-8 \tr(B^{\#}B^t).
$$
By \cite{NW1}, the classification result follows from the
non-positivity of $2\tri(\Rc)S-\sigma^2 |R_{ijkl}|^2$. Here
$\tri(\Rc)=2\langle \Rc^2+\Rc^{\#}, \Rc\rangle$. We now compute it
in terms of $A, B, C$. First it is easy to see that
\begin{equation}\label{q-1}
P\doteqdot=2\tri(\Rc)S-\sigma^2 |R_{ijkl}|^2=4\langle S (\Rc^2+\Rc^{\#})
-(\frac{S^2}{n}+\tilde \sigma^2)\Rc, \Rc \rangle.
\end{equation}
For the case $\dim(M)=4$ we have that
\begin{eqnarray*}
\langle \Rc^2+\Rc^{\#}, \Rc \rangle
&=&\tr(A^3)+\tr(C^3)+2\tr(A^{\#}A)+2\tr((B^t)^{\#}B)+2\tr(B^{\#}
B^t)\\&\quad&+2\tr(C^{\#}C) +3\tr(ABB^t)+3\tr(CB^t B)
\end{eqnarray*}
and
$$
\langle \Rc, \Rc\rangle =\tr(A^2)+\tr(C^2)+2|B|^2.
$$
Hence
\begin{eqnarray*}
\frac{1}{4}P&=&S\left(\tr(A^3)+\tr(C^3)+2\tr(A^{\#}A+C^{\#}C)+2\tr((B^t)^{\#}B)\right.\\
&\, &\left.+2\tr(B^{\#} B^t) +3\tr(ABB^t)+3\tr(CB^t
B)\right)\\
&\,&-(\frac{S^2}{4}+4|B|^2)(\tr(A^2)+\tr(C^2)+2|B|^2).
\end{eqnarray*}

Let $\overset{o} A$   be the traceless part of $A$. Similarly we
have $\overset{o}C$. By choosing suitable basis we may diagonalize
$\overset{o}A$ and $\overset{o}C$ such that we can  assume that
$$
A=\left(\begin{matrix} \frac{S}{12}+a_1 & 0&0\cr
0&\frac{S}{12}+a_2&0\cr 0& 0&
\frac{S}{12}+a_3\end{matrix}\right),\quad  \quad
C=\left(\begin{matrix} \frac{S}{12}+c_1 & 0&0\cr
0&\frac{S}{12}+c_2&0\cr 0& 0& \frac{S}{12}+c_3\end{matrix}\right).
$$
Now we can write
\begin{eqnarray}
\frac{1}{4}P&=&-S^2\left(\frac{1}{6}\sum_1^4\lambda_i^2+\sum_1^3 a_i^2 +\sum_1^3 c_i^2\right)
\nonumber\\
&\, & +4S \left(\sum_1^3 (a_i^3 +c_i^3)+6a_1a_2a_3 +6 c_1c_2c_3
-\frac{1}{2}\sum_1^4
\lambda_i^3\right)\\
&\,& +12S\left(a_1 b_1^2 +a_2 b_2^2 +a_3b_3^2 +c_1 \tilde b_1^2 +c_2
\tilde b_2^2 +c_3 \tilde b_3^2\right)\nonumber\\
 &\,&-2\left(\sum_1^4 \lambda_i^2\right)^2-4(\sum_{i=1}^4\lambda_i^2)
 \left(\sum_{i=1}^3 ( a_i^2+c_i^2)\right).\nonumber
\end{eqnarray}
Here $\sum_1^3 a_i=\sum_1^3 c_i =\sum_1^4\lambda_j =0$,
$b_i^2=\sum_{j=1}^3 B_{ij}^2$ and $\tilde b_i^2 =\sum_{j=1}^3
B_{ji}^2$. Hence $\sum_1^3 b_i^2 =\sum_1^3 \tilde b_i^2
=\frac{1}{4}\sum_1^4 \lambda_j^2$.

Check with some examples. After a scaling, we have that
$$
\Rc_{\Sph^4}=\left(\begin{matrix} \id & 0\cr 0 & \id\end{matrix}\right), \quad \Rc_{\Sph^3\times \R}
=\left(\begin{matrix} \id & F\cr F^t & \id\end{matrix}\right)\quad \Rc_{\Sph^2\times \Sph^2}
=\left(\begin{matrix} E & 0\cr 0 & E\end{matrix}\right), \quad \Rc_{\Sph^2\times \R^2}=
\left(\begin{matrix} E & E\cr E & E\end{matrix}\right)
$$
where
$$
F=\left(\begin{matrix} 1 & 0 &0\cr 0 & 1&0\cr 0 & 0& -1\end{matrix}\right)\quad
E=\left(\begin{matrix} 1 & 0 &0\cr 0 & 0&0\cr 0 & 0& 0\end{matrix}\right).
$$
It is easy to check that $P=0$ on the above examples. On the complex projective space
$\Rc_{\P^2}=\left(\begin{matrix} \id & 0\cr 0 & 3E\end{matrix}\right)$. $P=0$ in this case too!

 With suitable choices of
the orthornormal basis for $\Lambda_+$ and $\Lambda_{-}$ we can assume
that $A_i=\frac{S}{12}+a_i$, $C_i=\frac{S}{12}+c_i$. It is easy to see that
$\max\{\tilde b_i^2, b^2_i\} \le B_3^2$ for any $1\le i\le 3$.

   The main result of this section is to prove a special case of Theorem \ref{main1}.

     \begin{proposition}
Suppose that
     $BB^t=b^2\id$  for some $b$, $A$ and $C$ are positive semi-definite. Then
   $P\le 0$ and the universal cover of $M$ is either $\Sph^4$ or $\Sph^3\times \R$.

   \end{proposition}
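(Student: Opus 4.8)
The plan is to push the hypothesis $BB^t=b^2\id$ through the formula for $\frac14 P$ obtained above; everything should collapse to a manifestly non-positive expression. First I would record what $BB^t=b^2\id$ gives. In any orthonormal bases of $\Lambda_+,\Lambda_{-}$ one has $b_i^2=(BB^t)_{ii}=b^2$ and, since $B^tB=b^2\id$ as well, $\tilde b_i^2=(B^tB)_{ii}=b^2$ for every $i$; hence $|B|^2=3b^2$ and $\sum_1^4\lambda_j^2=\tilde\sigma^2=4|B|^2=12b^2$. Writing $B=bQ$ with $Q$ orthogonal and using $B^{\#}=-\det(B)(B^{t})^{-1}$ gives $\tr(B^{\#}B^t)=-3b^3\det Q$, so $\sum_1^4\lambda_j^3=-8\tr(B^{\#}B^t)=24\,b^3\det Q$; replacing $b$ by $b\det Q$ (which affects neither $b^2$ nor the hypothesis) we may take $\sum_1^4\lambda_j^3=24b^3$. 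The decisive point is that, because $\sum a_i=\sum c_i=0$, the ``linear'' term of $\frac14 P$ vanishes: $12S(a_1 b_1^2+\dots+c_3\tilde b_3^2)=12Sb^2(\sum a_i+\sum c_i)=0$.

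Next I would substitute $\sum_1^4\lambda_j^2=12b^2$, $\sum_1^4\lambda_j^3=24b^3$, and the identity $\sum a_i^3=3a_1a_2a_3$ (valid since $\sum a_i=0$, and likewise for the $c_i$) into the several-line expression for $\frac14 P$ and collect terms. The contributions involving only $b$ are $-2S^2b^2-48Sb^3-288b^4=-2b^2(S+12b)^2$; the surviving cubic contribution is $36S(\prod_i a_i+\prod_i c_i)$; and the remaining terms assemble into $-(S^2+48b^2)(\sum_i a_i^2+\sum_i c_i^2)$. This gives the compact identity
\[
\frac14 P=-2b^2(S+12b)^2-(S^2+48b^2)\left(\sum_i a_i^2+\sum_i c_i^2\right)+36S\left(\prod_i a_i+\prod_i c_i\right).
\]

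Now I would invoke positivity of the blocks. Since $\tr(A)=\tr(C)=\frac S4$, positive semidefiniteness of $A$ and $C$ forces $S\ge0$ and $A_i=\frac S{12}+a_i\ge0$, $C_i=\frac S{12}+c_i\ge0$. The key elementary estimate is $36\prod_i a_i\le S\sum_i a_i^2$ (and the same with $c$ in place of $a$): writing $a_i=A_i-\frac S{12}$, using $\sum A_i=\tr A=\frac S4$, and simplifying, this reduces to $9A_1A_2A_3\le(A_1+A_2+A_3)(A_1A_2+A_2A_3+A_3A_1)$, which is immediate from AM--GM applied to the two symmetric sums on the right (each at least $3$ times the appropriate power of $A_1A_2A_3$, since $A_i\ge0$). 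Multiplying by $S\ge0$ and adding the $a$- and $c$-versions gives $36S(\prod_i a_i+\prod_i c_i)\le S^2(\sum_i a_i^2+\sum_i c_i^2)\le(S^2+48b^2)(\sum_i a_i^2+\sum_i c_i^2)$, whence $\frac14 P\le-2b^2(S+12b)^2\le0$. Having $P\le0$ at every point of $M$, the non-positivity of $P$ implies, by \cite{NW1}, that the universal cover of $M$ is $\Sph^4$ or $\Sph^3\times\R$.

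The only genuine work I anticipate is the bookkeeping of the second step: arranging the several-line formula for $\frac14P$ so that the perfect square $-2b^2(S+12b)^2$ emerges and the surviving cubic part is exactly $36S(\prod_i a_i+\prod_i c_i)$, together with recognizing that $36\prod_i a_i\le S\sum_i a_i^2$ is nothing but the AM--GM inequality $9\prod_iA_i\le(\sum_iA_i)(\sum_{i<j}A_iA_j)$ in disguise. Once the identity is in that shape, non-positivity of $P$ is automatic from $A,C\ge0$, and the geometric conclusion is handed to us by \cite{NW1}.
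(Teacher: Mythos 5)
Your computation of the compact identity
\[
\tfrac14 P=-2b^2(S+12b)^2-(S^2+48b^2)\Bigl(\sum_i a_i^2+\sum_i c_i^2\Bigr)+36S\Bigl(\prod_i a_i+\prod_i c_i\Bigr)
\]
is correct, and the reduction $36\prod_i a_i\le S\sum_i a_i^2\ \Longleftrightarrow\ 9A_1A_2A_3\le(A_1+A_2+A_3)(A_1A_2+A_2A_3+A_3A_1)$ via AM--GM is also correct. This is a genuinely cleaner route to $P\le 0$ than the paper's, which proves the same numerical inequality $-S^2\sum a_i^2+12S\sum a_i^3\le 0$ by combining the extremal bound $\sum a_i^3\le\frac1{\sqrt6}(\sum a_i^2)^{3/2}$ (cited from Proposition~4.1 of \cite{NW1}) with the separate estimate $\sum a_i^2\le S^2/24$ forced by $A\ge 0$; your version packages the $b$-only terms into the perfect square $-2b^2(S\pm12b)^2$ explicitly, whereas the paper leaves those terms implicit in its ``it suffices to show'' step. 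So for the inequality $P\le 0$ your proposal is right and arguably tighter.

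However, there is a genuine gap at the end. The proposition asserts not just $P\le 0$ but the classification of the universal cover, and the paper obtains this by analyzing the equality locus $P=0$: it shows that the pointwise equality in the algebraic estimate forces either (i) $b=0$, $a_3=c_3=S/4$, $a_1=a_2=c_1=c_2=-S/12$, which contradicts positive isotropic curvature since $A_1+A_2=0$; or (ii) $a_i=c_i=0$, i.e.\ $W_\pm=0$, so $M$ is locally conformally flat, whereupon the proof of Corollary~4.2 of \cite{NW1} is invoked to identify the universal cover as $\Sph^4$ or $\Sph^3\times\R$. Your proposal replaces all of this with the single sentence ``by \cite{NW1}, the universal cover is $\Sph^4$ or $\Sph^3\times\R$,'' but \cite{NW1} does not hand you the classification from $P\le 0$ alone in the $4$-dimensional, non-conformally-flat setting; you still need to pin down the rigidity locus $P=0$ of your own identity (where $-2b^2(S\pm12b)^2=0$ and $36S\prod a_i=S^2\sum a_i^2$, etc.), rule out the bad branch by the PIC hypothesis, and reduce the surviving branch to the locally conformally flat classification. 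That step is missing from your argument.
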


   \begin{proof}
   Observing that
$$
\sum a_i^3+6a_1a_2a_3=3\sum a_i^3
$$
in order to show that $2\tri(\Rc)S-\sigma^2 |R_{ijkl}|^2\le 0$ it
suffices to show that
$$
-S^2 \sum a_i^2 +12 S \sum a_i^3-48b^2\sum a_i^2 \le 0.
$$
Here we have used that $\sum \lambda_i^2=12b^2$. Note that we have
the constraints that $\frac{S}{12}+a_i\ge 0$ and $\sum a_i =0$.
Using the fact that $\sum a_i^3 \le \frac{1}{\sqrt{6}}$ under the
constraints $\sum a_i=0$ and $\sum a_i^2 =1$, which can be obtained
by Proposition 4.1 of \cite{NW1}, we conclude that
$$
\frac{\sum a_i^3}{\sum a_i^2}\le \frac{1}{\sqrt{6}}a
$$
where $a^2=\sum a_i^2$. On the other hand, under the constraints
$\frac{S}{12}+a_i \ge 0$ and $\sum a_i =0$, the maximum of $\sum a_i^2$ is
$\frac{S^2}{24}$, which can be better seen by  expressing everything
in terms of $A_i=\frac{S}{12}+a_i\ge 0$. This shows that $-S^2 \sum
a_i^2 +12 S \sum a_i^3\le 0$ in view of $S> 0$. We can handle the
terms with $c_i$'s similarly. Furthermore, $-S^2 \sum a_i^2 +12 S
\sum a_i^3-48b^2\sum a_i^2=S^2 \sum c_i^2 +12 S \sum c_i^3-48b^2\sum
c_i^2=0$ implies either $b=0$ and $a_3=c_3=\frac{S}{4}$,
$a_1=a_2=c_1=c_2=-\frac{S}{12}$, or $a_i=c_i=0$, which is locally
conformally flat. The first case is excluded by the positivity of
the isotropic curvature. The second case was reduced to the previous
result of authors in \cite{NW1}. Evoking the proof of Corollary 4.2 of \cite{NW1} we obtain a complete classification
for this
special case. Note that we have used that $A$ and $C$ are semi-positive definite to
ensure that $\max\{\sum a_i^2, \sum c_i^2\}\le \frac{S^2}{24}$.
\end{proof}

In the next section we shall reduce the proof of Theorem \ref{main1} to this special case.

\begin{remark} It was pointed out to us by Christoph B\"ohm that the method of this
section alone is not enough to obtain the classification result for gradient shrinking solitons
 with positive curvature operator, unlike the three dimensional cases treated in \cite{NW1}.
\end{remark}

\section{The proof}

First we observe that some of the ordinary differential inequalities in \cite{H86} also hold as
 partial differential inequalities. We list the ones needed below.

\begin{proposition}\label{ham1} Let $(M, g(t))$ be a solution to Ricci flow. Let $A_i$, $B_i$ and $C_i$
 be the components of curvature operator as defined in the first section. Then with respect to the time
 dependent moving frame,
\begin{eqnarray*}
\left(\frac{\partial}{\partial t}-\Delta\right)(A_1+A_2)&\ge& A_1^2+A_2^2+2(A_1+A_2)A_3 +B_1^2+B_2^2,\\
\left(\frac{\partial}{\partial t}-\Delta\right)(C_1+C_2)&\ge & C_1^2+C_2^2+2(C_1+C_2)C_3 +B_1^2+B_2^2,\\
\left(\frac{\partial}{\partial t}-\Delta\right)B_3 &\le & A_3B_3+C_3B_3 +2B_1B_2.
\end{eqnarray*}
\end{proposition}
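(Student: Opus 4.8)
The plan is to view these three inequalities as the reaction--diffusion versions of Hamilton's pointwise eigenvalue estimates and to derive them from the evolution equation for $\Rc$ by the standard support-function (barrier) device for evolving eigenvalues. Recall from \cite{H86} that, with respect to the time dependent moving frame, $\frac{\p}{\p t}\Rc=\Delta\Rc+\Rc^2+\Rc^{\#}$, and that in the block form of $\Rc$ and $\Rc^{\#}$ from \S2 the corresponding reaction term is $N_A=A^2+BB^t+2A^{\#}$ on the $\wedge_+$ factor, $N_C=C^2+B^tB+2C^{\#}$ on the $\wedge_-$ factor, and $N_B=AB+BC+2B^{\#}$ on the off-diagonal block.

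For the first inequality I would use $(A_1+A_2)(x)=\min\{\langle Av_1,v_1\rangle+\langle Av_2,v_2\rangle:\ v_1\perp v_2,\ |v_1|=|v_2|=1\}$. Fix a point $(x_0,t_0)$, let $v_1,v_2$ realize the minimum there (so $v_1,v_2$ are unit eigenvectors of $A$ for $A_1,A_2$), and extend them to a neighbourhood of $(x_0,t_0)$ by parallel transport in space along minimizing geodesics from $x_0$ at time $t_0$, constant in $t$, so that at $(x_0,t_0)$ one has $\nabla v_i=0$ and $\frac{\p}{\p t}v_i=0$ while $|v_i|\equiv 1$. Then $\phi:=\langle Av_1,v_1\rangle+\langle Av_2,v_2\rangle$ is smooth, satisfies $\phi\ge A_1+A_2$ everywhere (it is the trace of $A$ on the $2$-plane spanned by the orthonormal pair $v_1(x),v_2(x)$) and $\phi(x_0,t_0)=(A_1+A_2)(x_0,t_0)$, so $\phi$ is a lower support function for $A_1+A_2$ at $(x_0,t_0)$. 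In computing $\heat\phi$ at $(x_0,t_0)$ every term carrying a derivative of $v_1$ or $v_2$ drops out: the first order ones because $\nabla v_i=0$ there, and $\langle A\,\Delta v_i,v_i\rangle=\langle\Delta v_i,Av_i\rangle=A_i\langle\Delta v_i,v_i\rangle=0$ since $v_i$ is an eigenvector at the base point and $\langle\Delta v_i,v_i\rangle=-|\nabla v_i|^2=0$ there. Hence $\heat\phi=\langle N_Av_1,v_1\rangle+\langle N_Av_2,v_2\rangle$ at $(x_0,t_0)$, and therefore $\heat(A_1+A_2)\ge\langle N_Av_1,v_1\rangle+\langle N_Av_2,v_2\rangle$ in the support sense. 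Since $v_1,v_2$ are eigenvectors of $A$ they are eigenvectors of $A^{\#}=\det(A)A^{-1}$ with eigenvalues $A_2A_3$ and $A_1A_3$, so $\langle(A^2+2A^{\#})v_1,v_1\rangle+\langle(A^2+2A^{\#})v_2,v_2\rangle=A_1^2+A_2^2+2(A_1+A_2)A_3$, while $\langle BB^tv_1,v_1\rangle+\langle BB^tv_2,v_2\rangle\ge B_1^2+B_2^2$ because the sum of two diagonal entries of the positive semidefinite matrix $BB^t$ in any orthonormal basis is at least the sum $B_1^2+B_2^2$ of its two smallest eigenvalues. Adding these gives the first inequality; the second is identical after exchanging $\wedge_+\leftrightarrow\wedge_-$, $B\leftrightarrow B^t$ (and $B^tB$ has the same eigenvalues $B_i^2$).

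For $B_3$ I would argue dually, with an upper support function. Write $B_3(x)=\max\{\langle w,Bu\rangle:\ w\in\wedge_+,\ u\in\wedge_-,\ |w|=|u|=1\}$, realized at $(x_0,t_0)$ by the top singular vectors $w_0,u_0$, so $Bu_0=B_3w_0$ and $B^tw_0=B_3u_0$. Extending $w_0,u_0$ by the same parallel transport, $\psi:=\langle w,Bu\rangle$ is smooth, $\psi\le B_3$ everywhere and $\psi(x_0,t_0)=B_3(x_0,t_0)$; the same cancellation of frame-derivative terms (using $\langle\Delta w,w_0\rangle=\langle\Delta u,u_0\rangle=0$ together with $Bu_0=B_3w_0$, $B^tw_0=B_3u_0$) gives $\heat B_3\le\heat\psi=\langle w_0,N_Bu_0\rangle$ at $(x_0,t_0)$ in the support sense, and
\begin{align*}
\langle w_0,N_Bu_0\rangle&=\langle Aw_0,Bu_0\rangle+\langle B^tw_0,Cu_0\rangle+2\langle w_0,B^{\#}u_0\rangle\\
&=B_3\langle Aw_0,w_0\rangle+B_3\langle Cu_0,u_0\rangle+2\langle w_0,B^{\#}u_0\rangle\le A_3B_3+C_3B_3+2B_1B_2,
\end{align*}
using $\langle Aw_0,w_0\rangle\le A_3$, $\langle Cu_0,u_0\rangle\le C_3$, $B_3\ge 0$, and $\langle w_0,B^{\#}u_0\rangle=\pm B_1B_2\le B_1B_2$ (in singular-value form $B^{\#}=-\det(B)(B^t)^{-1}$ carries $u_0$ to $\mp B_1B_2\,w_0$).

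The cancellation of the frame-derivative terms in $\heat\phi$ and $\heat\psi$, and the eigenvalue bookkeeping, are routine; the one point requiring genuine care is the evaluation of $\langle w_0,B^{\#}u_0\rangle$, that is, checking that pairing the adjugate-type operator $B^{\#}$ against the \emph{top} singular vectors of $B$ yields, up to sign, the product $B_1B_2$ of the \emph{two smallest} singular values --- this is precisely what produces the $2B_1B_2$ (rather than, say, $2B_2B_3$) on the right-hand side of the third inequality. As is standard for such eigenvalue estimates, the three inequalities are to be read in the support (barrier) sense, which is all that the maximum principle arguments in the remainder of the section require.
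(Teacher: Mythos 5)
Your proposal is correct and follows essentially the same route as the paper: both derive the three inequalities from $\heatau\Rc=\Rc^2+\Rc^\#$ by a barrier (support-function) argument — the paper's $\sum_{i,j=1}^2 A_{ij}g^{ij}$ in a frame diagonalizing $A$ at $(x_0,t_0)$ is exactly your lower support function $\phi=\langle Av_1,v_1\rangle+\langle Av_2,v_2\rangle$, and the singular-value/adjugate bookkeeping producing the $2B_1B_2$ term is the computation the paper compresses into ``the same line of argument as in \cite{H86}.'' Your write-up simply spells out the extension of the eigen/singular vectors by parallel transport and the Ky Fan-type estimate $\langle BB^tv_1,v_1\rangle+\langle BB^tv_2,v_2\rangle\ge B_1^2+B_2^2$ that the paper leaves implicit, while not addressing the paper's one extra remark that concavity of $A_1+A_2$ upgrades the barrier inequality to the distributional sense.
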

The differential inequality can be understood in the sense of distributions.
\begin{proof} The proof is essentially a repeat of the methods in \cite{H97}. Using a time dependent moving frame
we have that
$$
\left(\frac{\partial}{\partial t}-\Delta\right) \Rc =\Rc^2 +\Rc^{\#}.
$$
Fix a point $(x_0, t_0)$, choose a local frame so that $A$ and  $C$ are diagonal at $x_0$. Notice that
$\sum_{i, j=1}^2 A_{ij}g^{ij}\ge A_1+A_2$ and equality holds at $(x_0, t_0)$.
Hence at $(x_0, t_0)$ we have that
\begin{eqnarray*}
\left(\frac{\partial}{\partial t}-\Delta\right)\left(\sum_{i, j=1}^2 A_{ij}g^{ij}\right)&=&
 \sum_{i, j=1}^2 g^{ij} \left(A^2+BB^{t}+2A^{\#}\right)_{ij}\\
&\ge & A_1^2+A_2^2+2(A_1+A_2)A_3 +B_1^2+B_2^2.
\end{eqnarray*}
In the last line one uses the same line of argument as in \cite{H86}.
This shows the partial differential inequality in the sense of barrier. It then follows from the PDE theory,
in viewing of the concavity of $A_1+A_2$,  that the inequality also holds in the sense of distribution.
The other two inequalities can be shown similarly.
\end{proof}
Now we let $\psi_1=A_1+A_2$, $\psi_2=C_1+C_2$, $\varphi=B_3$. Our assumption on $M$ has positive isotropic
curvature implies that $\psi_1>0, \psi_2>0$. In the computations below we also assume $B_3>0$.
But it will be  clear later on that this is not necessary. Proposition \ref{ham1} implies
\begin{eqnarray*}
\left(\frac{\partial}{\partial t}-\Delta\right)\log \left(\frac{\varphi^2}{\psi_1\psi_2}\right)
&\le & 2|\nabla \log \varphi|^2 -|\nabla \log \psi_1|^2-|\nabla \log \psi_2|^2-\frac{2B_1(B_3-B_2)}{B_3}\\
&\, & -\frac{(A_1-B_1)^2+(A_2-B_2)^2+2A_2(B_2-B_1)}{A_1+A_2}\\
&\,&-\frac{(C_1-B_1)^2+(C_2-B_2)^2
+2C_2(B_2-B_1)}{C_1+C_2}.
\end{eqnarray*}
We denote the last three expressions as $-E$. It is clear that $-E\le 0$ with
 equality holds only if $A_1=C_1=B_1=B_2=A_2=C_2=B_3$. In particular we have that $B_1=B_2=B_3$,
  namely $BB^t=b^2\id$. Using the above partial differential inequality we have that
\begin{eqnarray*}
\left(\frac{\partial}{\partial t}-\Delta\right)\left(\frac{\varphi^2}{\psi_1\psi_2}\right)^2&\le &
 \left(\frac{\varphi^2}{\psi_1\psi_2}\right)^2\left(4|\nabla \log \varphi|^2
 -2|\nabla \log \psi_1|^2-2|\nabla \log \psi_2|^2-2E\right)\\
&\, & -4\left(\frac{\varphi^2}{\psi_1\psi_2}\right)^2|2\nabla \log \varphi -\nabla \log \psi_1-
\nabla \log \psi_2|^2.
\end{eqnarray*}
Now we compute the gradient terms.
\begin{eqnarray*}
&\,&4|\nabla \log \varphi|^2-2|\nabla \log \psi_1|^2-2|\nabla \log \psi_2|^2-4|2\nabla \log \varphi
-\nabla \log \psi_1-\nabla \log \psi_2|^2\\
&=&-2|2\nabla \log \varphi -\nabla \log \psi_1-\nabla \log \psi_2|^2
+2\langle \nabla \log \frac{\varphi}{\psi_1}, \nabla \log (\varphi \psi_1)\rangle\\
&\,& +2\langle \nabla \log \frac{\varphi}{\psi_2}, \nabla \log (\varphi \psi_2)\rangle
-2\langle \nabla \log \frac{\varphi}{\psi_1}, \nabla \log \frac{\varphi}{\psi_1}\rangle\\
&\, & -2\langle \nabla \log \frac{\varphi}{\psi_2}, \nabla \log \frac{\varphi}{\psi_2}\rangle
-4\langle \nabla \log \frac{\varphi}{\psi_1}, \nabla \log \frac{\varphi}{\psi_2}\rangle\\
&=& -2|\nabla \log \frac{\varphi}{\psi_1} +\nabla \log \frac{\varphi}{\psi_2}|^2+
8\langle \nabla \log \varphi, \nabla \log \psi_1\rangle
+8\langle \nabla \log \varphi, \nabla \log \psi_2\rangle\\
&\,& -4|\nabla \log \psi_1|^2-4|\nabla \log \psi_2|^2-4|\nabla \log \varphi|^2-
4\langle \nabla \log \psi_1, \nabla \log \psi_2\rangle\\
&=& -2|\nabla \log \frac{\varphi}{\psi_1} +\nabla \log \frac{\varphi}{\psi_2}|^2
-2|\nabla \log \frac{\varphi}{\psi_1}|^2-2|\nabla \log \frac{\varphi}{\psi_2}|^2\\
&\,& +2\langle \nabla \log \frac{\varphi^2}{\psi_1\psi_2}, \nabla
(\log \psi_1\psi_2)\rangle.
\end{eqnarray*}
Putting together we have that
\begin{eqnarray}
\left(\frac{\partial}{\partial t}-\Delta\right)\left(\frac{\varphi^2}{\psi_1\psi_2}\right)^2&\le &
 -2\left(\frac{\varphi^2}{\psi_1\psi_2}\right)^2E
 +\langle \nabla \left(\frac{\varphi^2}{\psi_1\psi_2}\right)^2,
  \nabla (\log \psi_1\psi_2)\rangle \nonumber\\
&\, & -2\left(\frac{\varphi^2}{\psi_1\psi_2}\right)^2\left(|\nabla \log \frac{\varphi}{\psi_1}
+\nabla \log \frac{\varphi}{\psi_2}|^2\right. \label{key1}\\
&\,&\left.+|\nabla \log \frac{\varphi}{\psi_1}|^2+|\nabla \log \frac{\varphi}{\psi_2}|^2\right).\nonumber
\end{eqnarray}
It is clear that the right hand side of the above inequality can be rewritten so that
$\varphi>0$ is not really required.
Since $(M, g)$ is a gradient shrinking soliton, letting $f$ be the potential function,
the computation in the Section 1 of \cite{NW1} implies that
$$
\frac{\partial }{\partial t} \left(\frac{\varphi^2}{\psi_1\psi_2}\right)^2
=\langle \nabla f, \nabla \left(\frac{\varphi^2}{\psi_1\psi_2}\right)^2\rangle.
$$
Now  multiply both sides of (\ref{key1}) by $e^{-f+\log (\psi_1+\psi_2)}$   and integrate over the manifold:
\begin{eqnarray*}&\, &
\int_M \langle \nabla f, \nabla  \left(\frac{\varphi^2}{\psi_1\psi_2}\right)^2\rangle
e^{-f+\log (\psi_1\psi_2)}
-\int_M \left(\Delta \left(\frac{\varphi^2}{\psi_1\psi_2}\right)^2\right)e^{-f+\log (\psi_1\psi_2)}\\
&\le & -2\int_M \left(\frac{\varphi^2}{\psi_1\psi_2}\right)^2E e^{-f+\log (\psi_1\psi_2)}
+\int_M \langle \nabla \left(\frac{\varphi^2}{\psi_1\psi_2}\right)^2,
\nabla (\log \psi_1\psi_2)\rangle e^{-f+\log (\psi_1\psi_2)}\\
&\,& -2\int_M \left(\frac{\varphi^2}{\psi_1\psi_2}\right)^2\left(|\nabla \log \frac{\varphi}{\psi_1}
 +\nabla \log \frac{\varphi}{\psi_2}|^2\right)e^{-f+\log (\psi_1\psi_2)}\\
&\,& -2\int_M \left(\frac{\varphi^2}{\psi_1\psi_2}\right)^2
\left(|\nabla \log \frac{\varphi}{\psi_1}|^2+
|\nabla \log \frac{\varphi}{\psi_2}|^2\right)e^{-f+\log (\psi_1\psi_2)}.
\end{eqnarray*}
All the integrals are finite by the derivative estimates of Shi \cite{Sh1},
 the assumption (\ref{pinch1}) and (\ref{eq0}),  and Lemma 1.3 of \cite{NW1}
 asserting that $f(x)\ge \frac{1}{8}r^2(x)-C$ (with $C>0$ and $r(x)$ being distance function to
  a fixed point). As in \cite{NW1}, integration by parts can be performed on the term involving the
   Laplacian operator in the left hand side of the above inequality. After the integration by parts
   and some cancelations we have that
\begin{eqnarray*}
0&\le& -\int_M  \left(\frac{\varphi^2}{\psi_1\psi_2}\right)^2E e^{-f+\log (\psi_1\psi_2)}\\
&\, &
 -2\int_M \left(\frac{\varphi^2}{\psi_1\psi_2}\right)^2\left(|\nabla \log \frac{\varphi}{\psi_1}
 +\nabla \log \frac{\varphi}{\psi_2}|^2\right)e^{-f+\log (\psi_1\psi_2)}\\
&\,& -2\int_M \left(\frac{\varphi^2}{\psi_1\psi_2}\right)^2
\left(|\nabla \log \frac{\varphi}{\psi_1}|^2+|\nabla \log \frac{\varphi}{\psi_2}|^2\right)
e^{-f+\log (\psi_1\psi_2)},
\end{eqnarray*}
which  implies that
$$
E=|\nabla \log \frac{\varphi}{\psi_1} +\nabla \log \frac{\varphi}{\psi_2}|
=|\nabla \log \frac{\varphi}{\psi_1}|=|\nabla \log \frac{\varphi}{\psi_2}|=0.
$$
In particular, we conclude that $B B^t=b^2\id$.
\bibliographystyle{amsalpha}

\end{document}